\numberwithin{equation}{section}
\newtheorem{thm}{Theorem}[section]
\newtheorem{cor}[thm]{Corollary}
\newtheorem{prop}[thm]{Proposition}
\numberwithin{equation}{section}
\let \a=\alpha
\begin{document}

%--------------------------------------------------
%%Don not change any thing in this part
\leftline{ \scriptsize}

\vspace{1.3 cm}
%----------------------------------------------------------------------------
\title
{ On the character tables of symmetric groups}
\author{Kamon Kawsathon $^{a}$ and Kijti Rodtes$^{b,\ast}$ }
\thanks{{\scriptsize
\newline MSC(2010): 05A17, 20C30.   \\ Keywords: Characters, Symmetric group, Partitions, Self conjugate partitions  \\
$^{\ast}$ Corresponding author.\\
E-mail addresses: teekawsathon@gmail.com (Kamon Kawsathon ), kijtir@nu.ac.th (Kijti Rodtes).\\
$^{A}$ Department of Mathematics, Faculty of Science, Naresuan University, Phitsanulok 65000, Thailand.\\
$^{B}$ Department of Mathematics, Faculty of Science, Naresuan University, and Research Center for Academic Excellent in Mathematics, Phitsanulok 65000, Thailand.\\}}
\hskip -0.4 true cm

\maketitle

%-----------------------------------------------------------------------------

%----------------------------------------------
\begin{abstract} In this paper, some zeros and non-zeros in the character tables of symmetric groups are displayed in the partition forms.   In particular, more zeros of self conjugate partitions beside odd permutations are heavily investigated.
\end{abstract}

\vskip 0.2 true cm

%-----------------------------------------------------------------------------

\pagestyle{myheadings}
\markboth{\rightline {\scriptsize Kamon Kawsathon and  Kijti Rodtes}}
         {\leftline{\scriptsize }}
\bigskip
\bigskip

%-----------------------------------------------------------------------------
%-----------------------------------------------------------------------------

\vskip 0.4 true cm

\section{Introduction}

 Representation theory of finite groups has evidently wide application in many areas of mathematics, such as graph theory \cite{N8}, combinatoric theory \cite{GA}, and number theory \cite{N11} et cetera.  Investigating character tables of finite groups is one of the most important and useful topic in this subject.
Even if there are many properties dealing with the construction of character tables of finite groups, there is a very few of such explicit tables.  Also, it seems that there is no a simple way to construct them. However, many researchers devoted times to study some general behavior of character tables, see for example \cite{N16,N12,N14,N13,N10,N15}.

 For symmetric groups $S_n$, there is a question of Navarro to Olsson (2010): $\lq \lq$ \emph{If $p$ is a prime, what are the elements $x$ of the symmetric groups $S_n$ such that $\chi(x) =0$ for all $\chi\in Irr(S_n)$ of degree divisible by $p$?}", \cite{LV}.
In 2015 and 2016, Lucia Morotti  found that the partition of $p$-adic type is $p$-vanishing, but for $p=2,3$ there are some $p$-vanishing conjugacy classes which are not the $p$-adic type, \cite{LV,LP}.  However, for primes $p>3$ such a conjugacy class is not found yet and she made a conjecture that, \lq\lq \emph{for prime $p\geq 5$, all $p$-vanishing classes are $p$-adic types}" (the meanings of $p$-vanishing and $p$-adic types are provided in the next section).

This conjecture motivates us to study conditions on vanishing conjugacy classes for some characters of symmetric groups $S_n$.  Some zeros and non-zeros in the character tables of symmetric groups are displayed as partitions of the forms in Section 3,4.  Some more zeros besides odd permutations of the characters associated to self conjugate partitions are found in Section 5.

\section{Preliminary}
A \textit{partition} of a positive integer $n$ is a tuple $\alpha =(\alpha_{1},\alpha_{2},\dots,\alpha_{r})$ of positive integers $\alpha_{1}\geq \alpha_{2}\geq \dots\geq \alpha_{r}$ such that $\alpha_{1}+\alpha_{2}+ \dots+\alpha_{r}=n$.  The integers $\a_i$'s are called the \textit{parts} of $\a$ and $r:=l(\a)$ the \textit{length} of $\a$ \cite{JB}.  To indicate that $\alpha$ is a partition of $n$, we write $\alpha \vdash n.$  For $i = 1,\dots , n$, if $t_i$ is the number of parts of $\a$ equal to $i$, then we can also write
$\a=(r^{t_r},\dots,1^{t_1})$.  Usually $i^{t_i}$ is left out if $t_i = 0$.
If $\alpha =(\alpha_{1},\alpha_{2},\dots,\alpha_{r})$ is a partition of $n$ then the \textit{Young diagram } $[\alpha]$ of $\alpha$ consists of $n$ boxes placed into $r$ rows, where the $i$-th row has $\alpha_{i}$ boxes.  The box in the $i$-th row and $j$-th column of $[\alpha]$ is called the \emph{$(i,j)$ node} of $[\alpha]$.
For each $i$, denote $\a^{\top}_i$
the number of parts of $\a$ which are bigger than or equal to $i$.  The partition $\a^{\top} =(\a^{\top}_1,\a^{\top}_2,\dots,\a^{\top}_s)$ is called
the \emph{conjugate partition associated with $\a.$}    If $\alpha=\alpha^\top$, then $\alpha$ is called a \textit{self-conjugate partition}.

 If $(i, j)$ is a node of $[\alpha]$ we denote by $H^{\alpha}_{i,j}$  the $(i, j)$-\textit{hook}
of $\alpha$ which is the set of nodes of $[\alpha]$ of the form $(i ,j')$ for some $ j'\geq j$  or $(i', j)$
for some $i'\geq i.$  The \textit{hook-length} $h^{\alpha}_{i,j}$
 of the $(i,j)$-node is equal to the number of nodes in $H^{\alpha}_{i,j}.$  The set of nodes $(l, k)$'s with $l\geq i, k\geq j$ of $\alpha$ such that $(l + 1, k + 1)$ is not in $[\alpha]$ is called the \emph{(i,j)-rim} of $[\alpha]$ and is denoted by $R^{\alpha}_{i,j}$.
  For $h\geq1$, let $w_h(\alpha)$ be
the $h$-\textit{weight }of $\alpha$ which is the maximum number of $h$-hooks which can be
recursively removed from $\alpha$ \cite{GA}.  The $h$-weight of a partition is also equal to the number of its hooks of length divisible by $h$.
  For a partition $\alpha$ of $n$ and $k\in \mathbb{N}$,  we also denote:
$$ I_k^{\alpha}:=\{{(i,j)}\,|\,h^\alpha_{i,j}=k\}.$$

In the symmetric group $S_n$, each conjugacy class of $S_n$ corresponds
naturally to the partitions of $n$ associated to the cycle structure of that class.   The value of the irreducible character $\chi^\alpha$, labeled by the partition $\alpha$, evaluated at the conjugacy class corresponding to a partition $\beta$ can be calculated recursively by the well known \emph{Murnaghan-Nakayama} formula, \cite{GA}.  Precisely,
if $\alpha$ is a partition of $n=k+m$  and $\beta\in S_{n}$ contains a $k$-cycle and $\rho\in S_{m}$ is of cycle type
  deleting $k$-cycle out of $\beta $, then
 $$\chi^{\alpha}(\beta)=\sum_{(i,j )\in I^\alpha_k}(-1)^{l^{\alpha}_{i,j}}\chi^{\alpha\backslash R^{\alpha}_{i,j} }(\rho),$$
where $l^{\alpha}_{i,j}:=\alpha_j^\top-i$ is the \emph{leg length} of the hook $H^{\alpha}_{i,j}$ and $\alpha\backslash R^{\alpha}_{i,j}$ simply denotes the partition associated to the Young diagram $[\alpha]\backslash R^{\alpha}_{i,j}$.  By the \emph{Frame-Robinson-Thrall Hook length} formula,  the degree of $\chi^\alpha$ can be calculated by
$$\chi^{\alpha}(1^n)=\frac{n!}{\prod_{{(i,j)}\in[\a]}h^{\a}_{i,j}}.$$
Note that, for each partition $\alpha$ of $n$, $h^{\a}_{i,j}=h_{j,i}^{\alpha^\top}$, for all $ (i,j)\in[\alpha]$ and hence   $$\chi^{\alpha}(1^n)=\frac{n!}{\prod h^{\a}_{i,j}}=\frac{n!}{\prod h^{\a^{\top}}_{j,i}}=\chi^{\alpha^{\top}}(1^n).$$
Namely, the degree of $\chi^\alpha$ and $\chi^{\alpha^\top}$ are always equal.

Let  $p$ be a prime and $n = a_{0} + a_{1}p + \dots + a_{t}p^{t}$ be the p-adic decomposition of $n$, (with $a_t\neq0$).  A partition of $n$ is of $p$-\textit{adic type} if it is of the form $$(s_{t,1}p^t ,\dots ,s_{t,h_{t}}p^t , \dots , s_{0,1} , \dots , s_{0,h_{0}})$$
 with $(s_{i,1} , \dots , s_{i,h_{i}})\vdash a_i$ for $0\leq i\leq t$.  As $0\leq a_{i}<p$ for $0\leq i\leq k$ we have equivalently that a partition $\alpha=(\alpha_j)_{j\geq0}$ is of
$p$-adic type if and only if $$\sum_{j:p^{i}|\alpha_{j},p^{i+1}\nmid\alpha_{j}}\alpha_j=a_{i}p^{i}$$
for $0\leq i\leq k$, \cite{GM,LV}.
 Let $\chi$ be an irreducible character of
a finite groups and let $p$ be a prime.  We say that $\chi$ is $p$-\textit{singular} if $p$ divides its
degree.  A conjugacy class of a finite group $G$ is
called $p$-\textit{vanishing} if all $p$-singular irreducible characters of $G$ take value $0$
on that conjugacy class.  For irreducible characters of symmetric groups, Lucia Morotti discovered that (Corollary 1.5 in  \cite{LV}):
\begin{thm}\label{2.1}
 Partitions of $p$-adic type are $p$-vanishing.
\end{thm}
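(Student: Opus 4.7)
The plan is to induct on $n$ using the classical $p$-core / $p$-quotient factorisation of character values due to Littlewood, Macdonald and James-Kerber, which repackages iterated Murnaghan-Nakayama. The base case $n < p$ is vacuous, since then $|S_n|$ is coprime to $p$ so no irreducible has $p$-singular degree. For the inductive step, separate the parts of $\beta$ by $p$-adic valuation: write $\beta = \beta^{(0)} \sqcup p\gamma$, where $\beta^{(0)}$ collects the cells of valuation $0$ (so $\beta^{(0)} \vdash a_0 < p$) and $\gamma$ is the partition of $n' = (n-a_0)/p = a_1 + a_2 p + \cdots + a_t p^{t-1}$ obtained by dividing the remaining parts by $p$. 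A direct inspection shows that $\gamma$ is itself of $p$-adic type, since its parts of valuation $j$ are exactly the $s_{j+1,k}$ (with multiplicity), summing to $a_{j+1} < p$.

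Macdonald's character formula then reads
\[
\chi^\alpha(\beta) \;=\; \pm\, \chi^{\alpha_{(p)}}(\beta^{(0)}) \sum_{\gamma = \gamma_0 \sqcup \cdots \sqcup \gamma_{p-1}} \prod_{k=0}^{p-1} \chi^{\alpha^{(k)}}(\gamma_k),
\]
with the sum over \emph{atomic} distributions of the parts of $\gamma$ into the $p$ quotient components subject to $|\gamma_k| = |\alpha^{(k)}|$. Combining the hook-length factorisation
\[
\prod_{(i,j)\in [\alpha]} h^\alpha_{i,j} \;=\; \prod h^{\alpha_{(p)}} \cdot p^{w_p(\alpha)} \prod_{k=0}^{p-1} \prod h^{\alpha^{(k)}}
\]
with Legendre's formula $v_p(m!) = (m-s_p(m))/(p-1)$ gives, in the case $|\alpha_{(p)}| = a_0$,
\[
v_p\bigl(\chi^\alpha(1^n)\bigr) \;=\; \sum_{k=0}^{p-1} v_p\bigl(\chi^{\alpha^{(k)}}(1^{|\alpha^{(k)}|})\bigr) \;+\; \frac{\sum_k s_p(|\alpha^{(k)}|) - s_p(w_p(\alpha))}{p-1},
\]
using that $\chi^{\alpha_{(p)}}(1)$ is coprime to $p$ when $|\alpha_{(p)}| < p$. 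Both terms on the right are nonnegative, the second by sub-additivity of base-$p$ digit sums applied to $w_p(\alpha) = \sum_k |\alpha^{(k)}|$.

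Under the hypothesis $p \mid \chi^\alpha(1^n)$ I would split into three cases. If $|\alpha_{(p)}| \neq a_0$, the factor $\chi^{\alpha_{(p)}}(\beta^{(0)})$ vanishes since $\beta^{(0)} \vdash a_0$. If $|\alpha_{(p)}| = a_0$ and some $\chi^{\alpha^{(k)}}(1^{|\alpha^{(k)}|})$ is $p$-singular, then every atomic piece $\gamma_k$ inherits $p$-adic type (the $j$th base-$p$ digit of $|\gamma_k|$ is bounded by $a_{j+1} < p$, so no carries arise), and the inductive hypothesis gives $\chi^{\alpha^{(k)}}(\gamma_k) = 0$, killing every summand. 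The remaining sub-case is when all $\chi^{\alpha^{(k)}}(1)$ are coprime to $p$ but $\sum_k s_p(|\alpha^{(k)}|) > s_p(w_p(\alpha))$; here any atomic splitting of $\gamma$ must satisfy $\sum_k s_p(|\gamma_k|) = s_p(w_p(\alpha))$, which is incompatible with the required sizes $|\gamma_k| = |\alpha^{(k)}|$, so the Macdonald sum is empty. The main obstacle is setting up the signed atomic form of Macdonald's identity correctly and verifying the carry-free property of atomic splittings; once these combinatorial points are secured, the $p$-adic bookkeeping above is routine.
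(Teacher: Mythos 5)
First, note that the paper itself gives no proof of Theorem \ref{2.1}: it is imported verbatim from Morotti \cite[Corollary 1.5]{LV}, so there is no internal argument to compare yours against. Judged on its own merits, your induction through the $p$-core and $p$-quotient is a genuinely workable route, close in spirit to Morotti's original quotient-based analysis. The key combinatorial steps are sound: when $|\alpha_{(p)}|=a_0$, your valuation identity for $\deg\chi^\alpha$ follows from $v_p\big(\prod h^{\alpha}\big)=w_p(\alpha)+\sum_k v_p\big(\prod h^{\alpha^{(k)}}\big)$ together with Legendre's formula; in any admissible distribution the parts of $\gamma_k$ of valuation $j$ sum to $c_{k,j}p^{j}$ with $\sum_k c_{k,j}=a_{j+1}<p$, so each $\gamma_k$ is again of $p$-adic type and $\sum_k s_p(|\gamma_k|)=s_p(w_p(\alpha))$; hence your Case A kills every summand by the inductive hypothesis, and in Case B the size constraints $|\gamma_k|=|\alpha^{(k)}|$ are unsatisfiable, so the sum is empty. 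The case split is also exhaustive, since $v_p(\deg\chi^\alpha)>0$ forces one of the two alternatives.

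Two points need repair or an explicit citation. First, the factorisation you call Macdonald's identity is only valid when the $p$-divisible parts of $\beta$ exactly exhaust the $p$-weight, i.e.\ when $|\alpha_{(p)}|=a_0$; when $|\alpha_{(p)}|\neq a_0$ the written factor $\chi^{\alpha_{(p)}}(\beta^{(0)})$ does not even make sense (the sizes differ), so ``the factor vanishes'' is not a justification. The correct and easy argument there: $|\alpha_{(p)}|\equiv n\equiv a_0 \pmod p$ with $0\le a_0<p$ forces $|\alpha_{(p)}|>a_0$, hence $w_p(\alpha)<|\gamma|$; since removing a rim hook of length $p\gamma_i$ lowers the $p$-weight by exactly $\gamma_i$, every branch of the iterated Murnaghan--Nakayama expansion dies before all parts of $p\gamma$ are removed, and $\chi^\alpha(\beta)=0$ directly. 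Second, the signed atomic identity itself---a single global sign independent of the distribution, namely the relative $p$-sign between $\alpha$ and $\alpha_{(p)}$, times the product of quotient-level Murnaghan--Nakayama signs---is exactly the point you flag as the main obstacle; it is standard (multiplicativity of relative $p$-signs along chains of hook removals, together with the bijection between hooks of $[\alpha]$ of length divisible by $p$ and hooks of the quotient; see James--Kerber \cite{GA} Section 2.7 or Olsson \cite{JB}), but your proof is incomplete until it is either cited precisely or proved. With those two items supplied, the argument goes through.
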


\section{Some non-zeros in the character table of $S_n$}
In this section, we provide an observation for conjugacy classes which are not zero under the evaluation of some irreducible characters of finite groups.   For any finite group $G$, element $g\in G$ and an irreducible character $\chi$ of $G$, it is well known that (see, for example, Lemma 2.15 in \cite{N18}) $$\chi(g)=\sum^{\chi(1)}_{i=1}\varepsilon_i,$$ where $\varepsilon_i$ 's are $o(g)$-roots of unity.   For any positive integer $m=p_1^{a_1}\cdots p_r^{a_r}$ (prime factorization), the weight set $W(m)$ means the set of all non-negative integers $k$ in which there are $m$-roots of unity $\varepsilon_1,\dots,\varepsilon_k$ such that $\varepsilon_1+\cdots+\varepsilon_k=0$.  The main theorem of  T.Y.  Lam and K.H.  Leung in \cite{N17} asserts that the weight set $W(m)$ is exactly given by $\mathbb{N}_{0}p_1+\cdots +\mathbb{N}_{0}p_r$, where $\mathbb{N}_{0}$ denotes the set of all non-negative integers.  Therefore the following is immediate:

\begin{thm}\label{3.1}
 Let $\alpha$ and $\beta=(\beta_1,\dots,\beta_k) $ be partitions of $n$, $m=\operatorname{lcm}(\beta_1,\dots,\beta_k)=p_1^{a_1}\cdots p_r^{a_r}$ and $W(m)=\{n_1p_1+\cdots+n_rp_r\, |\,n_i\in\mathbb{N}_0\}.$  If $\chi^{\a}(1^n)\not\in W(m),$ then $\chi^{\a}(\beta)\neq 0$.
\end{thm}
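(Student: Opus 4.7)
My plan is to prove the theorem by contrapositive, turning the assumption $\chi^{\alpha}(\beta)=0$ into a membership statement $\chi^{\alpha}(1^n)\in W(m)$ and then invoking the Lam--Leung description of $W(m)$ already recalled in the preamble. The only ingredient I need is the classical fact quoted in the preamble (Lemma 2.15 in \cite{N18}) that for any irreducible character $\chi$ of a finite group and any element $g$, the value $\chi(g)$ is a sum of $\chi(1)$ roots of unity of order dividing $o(g)$.

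First, I would observe that the order of a permutation of cycle type $(\beta_1,\ldots,\beta_k)$ equals the least common multiple of its cycle lengths, so $o(\beta)=\operatorname{lcm}(\beta_1,\ldots,\beta_k)=m$. Applying the cited lemma to $\chi=\chi^{\alpha}$ and $g=\beta$ then gives the representation
$$\chi^{\alpha}(\beta)=\sum_{i=1}^{\chi^{\alpha}(1^n)}\varepsilon_i,$$
in which each $\varepsilon_i$ is an $m$-th root of unity.

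Suppose now, toward the contrapositive, that $\chi^{\alpha}(\beta)=0$. Then the display above exhibits $\chi^{\alpha}(1^n)$ many $m$-th roots of unity whose sum is zero, so by the very definition of the weight set, $\chi^{\alpha}(1^n)\in W(m)$. Invoking the Lam--Leung theorem, which states that $W(m)=\mathbb{N}_0 p_1+\cdots+\mathbb{N}_0 p_r$, I would conclude that $\chi^{\alpha}(1^n)\in\mathbb{N}_0 p_1+\cdots+\mathbb{N}_0 p_r$, contradicting the hypothesis. Taking the contrapositive finishes the proof.

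There is essentially no technical obstacle here; the theorem is almost a one-line corollary of Lam--Leung once the character value has been written as a sum of $m$-th roots of unity. The only small point worth spelling out is the equality $o(\beta)=m$, which is the standard description of the order of a permutation in terms of its disjoint cycle lengths; I would either state this in passing or leave it as a well-known fact.
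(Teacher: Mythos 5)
Your proof is correct and follows exactly the route the paper intends: the paper states the theorem as "immediate" from the two facts it has just recalled (the character value of $\chi^\alpha$ at an element of order $m=\operatorname{lcm}(\beta_1,\dots,\beta_k)$ is a sum of $\chi^\alpha(1^n)$ many $m$-th roots of unity, plus the Lam--Leung description of $W(m)$), and your contrapositive argument is precisely that reasoning spelled out.
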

In particular,
\begin{cor}\label{3.2}
Let $\a$ and $\beta=(\beta_1,\dots,\beta_k)$ be  partitions of $n$ such that $\operatorname{lcm}(\beta_1,\dots,\beta_k)=p^t$ for some integer $t>0$.  If $p\nmid \chi^{\a}(1^n)$, then $\chi^{\a}(\beta)\neq 0.$
\end{cor}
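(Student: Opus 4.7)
The plan is to derive the corollary as a direct specialization of Theorem \ref{3.1} to the case of prime power least common multiple. First I would set $m = \operatorname{lcm}(\beta_1,\dots,\beta_k) = p^t$ in Theorem \ref{3.1}. Since the prime factorization of $m$ has only one distinct prime, namely $p$, the formula $W(m) = \mathbb{N}_0 p_1 + \cdots + \mathbb{N}_0 p_r$ collapses to $W(p^t) = \mathbb{N}_0 \, p$, i.e.\ the set of non-negative integer multiples of $p$.

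Next I would verify that the hypothesis $p \nmid \chi^{\alpha}(1^n)$ translates into the assumption of Theorem \ref{3.1}. Because $\chi^{\alpha}(1^n)$ is the degree of an irreducible representation, it is a positive integer. Any positive integer that is not divisible by $p$ cannot belong to $p\mathbb{N}_0$, so $\chi^{\alpha}(1^n) \notin W(m)$. Applying Theorem \ref{3.1} then yields $\chi^{\alpha}(\beta) \neq 0$, as desired.

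There is essentially no obstacle here; the corollary is a one-line consequence of Theorem \ref{3.1} and the Lam--Leung description of $W(m)$. The only small point to keep in mind is that $W(p^t)$ depends on $p$ but not on the exponent $t$, so the conclusion is uniform in $t$. No further use of the Murnaghan--Nakayama machinery or of the structure of $\alpha$ is needed.
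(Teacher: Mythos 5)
Your proposal is correct and is exactly the argument the paper intends: the corollary is stated as an immediate special case (``In particular'') of Theorem \ref{3.1}, with $W(p^t)=\mathbb{N}_0\,p$ by Lam--Leung, so $p\nmid\chi^{\a}(1^n)$ gives $\chi^{\a}(1^n)\notin W(p^t)$ and hence $\chi^{\a}(\beta)\neq 0$. Nothing further is needed.
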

For example, if $\a=(2,1^{n-2})$ and $\beta$ be a partition of $n$ such that $\operatorname{lcm}(\beta_1,\dots,\beta_k)=p^t$ for some integer $t>0$ and $p\nmid  (n-1)$, then $\chi^{\a}(\beta)\neq 0$ (because $\operatorname{deg}(\chi^\a)=n-1$).

Note from the Diophantine Frobenius problem that the largest number that cannot be written in the form $$ \sum_{i=1}^n a_i x_i, \quad x_i\in \mathbb{N}_0,$$ for given positive integers $a_1,\dots,a_n$ with $\operatorname{gcd}(a_1,\dots,a_n)=1$ is called the \emph{Froenius number} and denoted by $g(a_1,\dots,a_n)$.  It is well known that for $a_1,a_2\in \mathbb{N}$ with $\operatorname{gcd}(a_1,a_2)=1$, then $g(a_1,a_2)=a_1a_2-a_1-a_2$.  So the following is an immediate consequence of Theorem \ref{3.1}.

\begin{cor}\label{3.4}
Let $\a$ and $\beta=(\beta_1,\beta_2,\dots,\beta_k)$ be  partitions of $n$ such that $\operatorname{lcm}(\beta_1,\beta_2,\dots,\beta_k)$ is $p^{a_1}q^{a_2}$  (prime factorization) for some integer $a_1,a_2>0$.  If $ \chi^{\a}(1^n)=pq-p-q$, then $\chi^{\a}(\beta)\neq 0$.
\end{cor}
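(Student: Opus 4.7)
The plan is to apply Theorem \ref{3.1} directly. Since $\operatorname{lcm}(\beta_1,\ldots,\beta_k) = p^{a_1} q^{a_2}$ has exactly two distinct prime divisors, the associated weight set specializes to
$$W(p^{a_1} q^{a_2}) = \{n_1 p + n_2 q \mid n_1, n_2 \in \mathbb{N}_0\},$$
so by Theorem \ref{3.1} the conclusion $\chi^{\alpha}(\beta) \neq 0$ follows the moment we verify that the integer $\chi^{\alpha}(1^n) = pq - p - q$ fails to belong to this set, i.e.\ cannot be written as a non-negative integer combination of $p$ and $q$.

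For this verification I would invoke the two-generator Frobenius formula already recalled in the paragraph preceding the statement: since $p$ and $q$ are distinct primes we have $\gcd(p,q) = 1$, so the Frobenius number of the pair is $g(p,q) = pq - p - q$. By definition this is the largest non-negative integer not representable as $n_1 p + n_2 q$ with $n_1, n_2 \in \mathbb{N}_0$, and in particular it does not lie in $W(p^{a_1} q^{a_2})$. Feeding this back into Theorem \ref{3.1} together with the hypothesis $\chi^{\alpha}(1^n) = pq - p - q$ immediately yields $\chi^{\alpha}(\beta) \neq 0$.

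There is no substantive obstacle here; the corollary is essentially a one-line consequence of chaining Theorem \ref{3.1} with the classical identity $g(p,q) = pq - p - q$. The only point worth flagging for the reader is that $pq - p - q = (p-1)(q-1) - 1 \geq 0$ whenever $p, q \geq 2$, so the prescribed value of $\chi^{\alpha}(1^n)$ is a genuinely realisable non-negative integer and the hypothesis is not vacuous.
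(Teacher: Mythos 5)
Your proposal is correct and follows the same route as the paper, which presents the corollary as an immediate consequence of Theorem \ref{3.1} combined with the classical Frobenius number identity $g(p,q)=pq-p-q$ for coprime $p,q$ recalled just before the statement. Nothing further is needed.
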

For example, if $\a=(2,1^{n-2})$ and $\beta$ are partitions of $n=(p-1)(q-1)$ such that $\operatorname{lcm}(\beta_1,\beta_2,\dots,\beta_k)=p^t q^s$ for some positive integer $t,s$, then   $\chi^{\a}(1^n)=pq-p-q$. Hence $\chi^{\a}(\beta)\neq 0$.

\section{Some zeros in the character table of $S_n$}
 First, we observe that:
%$\alpha\diagdown k $:= partition of $n-k$ obtained by deleting part $k$ out of $\alpha$, \\
%$\alpha^{(k)}:=$ the partition associated to the Young diagram $[\alpha^{(k-1)}]\diagdown R^{\alpha^{(k-1)}}_{i,j}$ with $\a^{(0)}=[\a]$.
\begin{prop}\label{4.1}
Let $\alpha$ and $\beta$ be partitions of $n$ such that $\alpha=(\alpha_1,1^{n-\alpha_1})$ with $n-\alpha_1\geq\alpha_1>1 .$  If $\beta=(\beta_1,\dots,\beta_r,\alpha_1,1)$, then $\chi^\alpha(\beta)=0$.
\end{prop}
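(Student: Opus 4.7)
The plan is to apply the Murnaghan--Nakayama rule recursively, processing the parts of $\beta$ in the order $\beta_1,\beta_2,\dots,\beta_r,\alpha_1,1$, and to show that no admissible sequence of rim-hook removals from $\alpha$ reaches the empty partition. Every intermediate partition in such a chain remains a hook, since removing a rim hook from a hook yields either a hook or the empty shape. From a hook $(a',1^{b'})$ the only rim hooks are: a \emph{row hook} at $(1,j)$ for some $j\geq 2$ of length at most $a'-1$, leaving $(a'-m,1^{b'})$; a \emph{column hook} at $(i,1)$ for some $i\geq 2$ of length at most $b'$, leaving $(a',1^{b'-m})$; or the \emph{principal hook} at $(1,1)$ of length $a'+b'$, leaving the empty partition.

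Since $\beta$ is weakly decreasing and $\alpha_1$ appears as a part, each $\beta_i$ with $i\le r$ satisfies $\beta_i\ge \alpha_1>1$. As the first row of a hook can only shrink under removal, the current first-row length $a'$ always satisfies $a'\le \alpha_1$, so any row hook has size at most $\alpha_1-1<\alpha_1\le\beta_i$; row hooks are thus impossible in the first $r$ steps. A principal hook at step $i\le r$ would empty the diagram prematurely, leaving nothing from which $\beta_{i+1},\dots,\beta_r,\alpha_1,1$ can be peeled off, so principal hooks are likewise excluded. Hence each of the first $r$ removals is a column hook, which preserves the first row. After these $r$ steps the partition is forced to be
\[
\bigl(\alpha_1,\,1^{\,b_r}\bigr),\qquad b_r=(n-\alpha_1)-(\beta_1+\dots+\beta_r)=1,
\]
i.e.\ the two-box hook $(\alpha_1,1)$.

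The next step requires a rim hook of length $\alpha_1$ in $(\alpha_1,1)$. Tabulating the hook lengths of $(\alpha_1,1)$ gives $\alpha_1+1$ at $(1,1)$, the values $\alpha_1-1,\alpha_1-2,\dots,1$ at $(1,j)$ for $j=2,\dots,\alpha_1$, and $1$ at $(2,1)$; in particular $\alpha_1$ never appears. Thus no such rim hook exists, the Murnaghan--Nakayama recursion contributes zero, and $\chi^\alpha(\beta)=0$.

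The main technical point will be the simultaneous exclusion of both row and principal hook removals during the first $r$ steps, which leans crucially on $\beta_i\ge\alpha_1>1$ and on the invariance of the first row under column-hook removal. Once these steps are locked in, the conclusion reduces to the single observation that $\alpha_1$ is absent from the hook-length multiset of $(\alpha_1,1)$.
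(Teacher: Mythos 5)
Your proof is correct and follows essentially the same route as the paper: iterate Murnaghan--Nakayama on $\beta_1,\dots,\beta_r$, observe that since each $\beta_i\ge\alpha_1$ the only admissible removal from the intermediate hook shapes is the unique leg (column) hook, so any surviving chain is forced to reach $(\alpha_1,1)$, which has no hook of length $\alpha_1$. Only a cosmetic slip: $(\alpha_1,1)$ is a two-\emph{row} hook with $\alpha_1+1$ boxes, not a ``two-box'' hook.
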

\begin{proof} Let $k\in\{1,\dots,r\}$.   Denote $\a^{(k)}$ the partition associated to the Young diagram obtained by removing the hook of length $\beta_{k}$ from $[\alpha^{(k-1)}]$, where $[\a^{(0)}]=[\a]$. 
 Since $\alpha=(\alpha_1,1^{n-\alpha_1})$ and $n-\alpha_1\geq \alpha_1$, $[\a]$ contains exactly one hook of length $l$ for each $n-\a_1\geq l\geq\alpha_1$.         Since $\beta_k\geq \alpha_1$, we have that $I^{\alpha^{(k-1)}}_{\beta_k}$ contains at most one element.  If $I^{\alpha^{(k-1)}}_{\beta_k}$ has no element, $\chi^\alpha(\beta)=0$, by Murnaghan-Nakayama formula.  Suppose that $I^{\alpha^{(k-1)}}_{\beta_k}$ has exactly one element and let $ I^{\alpha^{(k-1)}}_{\beta_k}=\{(i_k,j_k)\}$.  Then, by Murnaghan-Nakayama formula,
$$\chi^\alpha(\beta)= (-1)^{l^{\a}_{(i_1,j_1)}}(-1)^{l^{\alpha^{(1)}}_{(i_2,j_2)}}\cdots(-1)^{l^{\alpha^{(r-1)}}_{(i_{r},j_{r})}}\chi^\gamma(\alpha_1,1).$$
Let $\gamma$ be the partition associated to the Young diagram obtained by removing a sequence of hooks of lengths $\beta_1,\beta_2,\dots,\beta_r$ from $\a$; i.e., $\gamma=(\alpha_1,1)$.  Since $[\gamma]$ does not contain a hook of length $\a_1 $, we now conclude that   $\chi^{\alpha}(\beta)=0$.
\end{proof}

Let $\alpha$ be a  partition of $n$.  If $\gamma$ is the partition associated to the Young diagram $[\gamma]$ obtained by the process $P^\alpha(\beta_1,\beta_2,\dots,\beta_s)$:
 \begin{center}
 \lq\lq \emph{removing a hook of length $\beta_1$  out of $[\alpha]$  at node $(i_1,j_1)$ following by removing a hook of length $\beta_2$ out of $[\alpha]\setminus R^\alpha_{(i_1,j_1)}$ at node $(i_2,j_2)$ of $[\alpha]\setminus R^\alpha_{(i_1,j_1)}$  and so on till the s-th step}",
\end{center}
then we will denote $\gamma$ by $\alpha^{(s)}_{\vec{a}}(\beta_1,\beta_2,\dots,\beta_s)$, where $\vec{a}=(a_1,a_2,\dots,a_s)$ is the finite sequences of pairs of positive integers $a_1=(i_1,j_1),a_2=(i_2,j_2),\dots, a_s=(i_s,j_s)$.  Denote $I^\alpha(\beta_1,\beta_2,\dots\beta_s)$ the set of all sequences $\vec{a}$ of pair of the positive integers  for which the process $P^\alpha( \beta_1,\beta_2,\dots,\beta_s)$ can be done.  Note that if $s=1$, then $I^\alpha(\beta_1)=I^\alpha_{\beta_1}$.

\begin{thm}\label{4.2}
 Let $\alpha$ and $\beta=(\beta_1,\dots,\beta_s,\dots,\beta_k) $ be partitions of $n$, for some $1\leq s< k$.  Let $p$ be a prime.  If $p|\operatorname{deg}(\chi^{\alpha^{(s)}_{\vec{a}}(\beta_1,\dots,\beta_s)})$ for all $\vec{a}\in I^\alpha( \beta_1,\dots,\beta_s) $ and $(\beta_{s+1},\dots,\beta_k)\vdash m$, $(1<m)$ is a partition of $p$-vanishing, then $\chi^\alpha(\beta)=0$.
\end{thm}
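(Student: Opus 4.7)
The plan is to apply the Murnaghan--Nakayama formula iteratively $s$ times, successively peeling off cycles of lengths $\beta_1,\beta_2,\dots,\beta_s$ from $\alpha$. Each application expands the character value into a sum over the admissible choices of hook to remove at that step; after $s$ iterations these choices are parametrized precisely by $\vec{a}\in I^\alpha(\beta_1,\dots,\beta_s)$. A short induction on $s$, modelled on the computation in the proof of Proposition~\ref{4.1}, yields the identity
$$
\chi^\alpha(\beta) \;=\; \sum_{\vec{a}\in I^\alpha(\beta_1,\dots,\beta_s)} \varepsilon_{\vec{a}}\,\chi^{\alpha^{(s)}_{\vec{a}}(\beta_1,\dots,\beta_s)}(\beta_{s+1},\dots,\beta_k),
$$
where $\varepsilon_{\vec{a}}\in\{\pm 1\}$ is the product of the signs $(-1)^{l^{\bullet}_{i,j}}$ coming from the leg lengths of the $s$ hooks removed along the path $\vec{a}$. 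This combinatorial bookkeeping is where I expect the only real work to lie: one must check that the recursion produces exactly the set $I^\alpha(\beta_1,\dots,\beta_s)$, with no branch missed and no spurious branch introduced.

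With the identity in hand, I would then observe that every summand on the right is the value of an irreducible character of $S_m$, where $m=n-\beta_1-\cdots-\beta_s>1$, evaluated at the conjugacy class of $S_m$ of cycle type $(\beta_{s+1},\dots,\beta_k)$. The hypothesis is tailored exactly to this expansion: for every $\vec{a}\in I^\alpha(\beta_1,\dots,\beta_s)$, the prime $p$ divides the degree of $\chi^{\alpha^{(s)}_{\vec{a}}(\beta_1,\dots,\beta_s)}$, so each such character is $p$-singular as a character of $S_m$.

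Finally, since $(\beta_{s+1},\dots,\beta_k)\vdash m$ corresponds to a $p$-vanishing conjugacy class, every $p$-singular irreducible character of $S_m$ takes value $0$ on it by definition. Hence every summand in the displayed identity is zero, and therefore $\chi^\alpha(\beta)=0$. The argument is essentially a direct assembly of three ingredients—iterated Murnaghan--Nakayama, the degree hypothesis, and the definition of $p$-vanishing—with the inductive verification of the indexing set $I^\alpha(\beta_1,\dots,\beta_s)$ being the only nontrivial step.
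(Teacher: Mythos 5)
Your proposal is correct and follows essentially the same route as the paper: iterate the Murnaghan--Nakayama formula $s$ times to expand $\chi^\alpha(\beta)$ as a signed sum over $\vec{a}\in I^\alpha(\beta_1,\dots,\beta_s)$ of values $\chi^{\alpha^{(s)}_{\vec{a}}(\beta_1,\dots,\beta_s)}(\beta_{s+1},\dots,\beta_k)$, each of which vanishes because the degree hypothesis makes these characters $p$-singular in $S_m$ and the class $(\beta_{s+1},\dots,\beta_k)$ is $p$-vanishing. Your appeal directly to the definition of $p$-vanishing is in fact cleaner than the paper's (redundant) citation of its Theorem on $p$-adic types.
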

\begin{proof} We first remove $\beta_1$ out of $[\alpha]$ at all possible nodes.  By Murnaghan-Nagayama formula, we have
 $$\chi^{\alpha}(\beta)=\sum_{(a_1) \in I^{\a}(\beta_1) }(-1)^{l^{\a}_{a_1}}\chi^{\alpha_{(a_1)}^{(1)}(\beta_1)}(\beta_2,\dots,\beta_s,\dots,\beta_k).$$
Next, we remove $\beta_2$ out of $[\a^{(1)}_{(a_1)}(\beta_1)]$ at all possible nodes.  We then have
$$\chi^{\alpha}(\beta)=\sum_{(a_1) \in I^{\a}(\beta_1) }(-1)^{l^{\a}_{a_1}}(\sum_{a_2 \in I^{\a^{(1)}_{(a_1)}(\beta_1)}_{\beta_2} }(-1)^{l^{\a^{(1)}_{(a_1)}(\beta_1)}_{a_2}} \chi^{\alpha^{(2)}_{(a_1,a_2)}(\beta_1,\beta_2)}(\beta_3,\dots,\beta_s,\dots,\beta_k)).$$
By repeating this process, we conclude that
\begin{eqnarray*}
% \nonumber to remove numbering (before each equation)
   \chi^{\alpha}(\beta)&=&\sum_{(a_1) \in I^{\a}(\beta_1) }(-1)^{l^{\a}_{a_1}} \cdots  \sum_{a_s \in I^{\alpha^{(s-1)}}_{\beta_s} }(-1)^{l^{\alpha^{(s-1)}}_{a_s}}    \chi^{\alpha^{(s)}_{\vec{a}}(\beta_1,\dots,\beta_s)}(\beta_{s+1},\dots,\beta_k)  \\
   &=&   \sum_{\vec{a}\in I^\alpha(\beta_1,\dots,\beta_s) } \pm\chi^{\alpha^{(s)}_{\vec{a}}(\beta_1,\dots,\beta_s)}(\beta_{s+1},\dots,\beta_k)
\end{eqnarray*}
where $\vec{a}=(a_1,\dots,a_s)$, $I^{\alpha^{(s-1)}}_{\beta_s}=I^{\alpha^{(s-1)}_{(a_1,\dots,a_{s-1})}(\beta_1,\dots,\beta_{s-1})}_{\beta_s} $ and $l^{\alpha^{(s-1)}}_{a_s}=l^{\alpha^{(s-1)}_{(a_1,\dots,a_{s-1})}(\beta_1,\dots,\beta_{s-1})}_{a_s} $.
Since $(\beta_{s+1},\dots,\beta_k)\vdash m$ is a partition of $p$-vanishing and $p|\operatorname{deg}(\chi^{\alpha^{(s)}_{\vec{a}}(\beta_1,\dots,\beta_s)})$ for all $\vec{a}\in I^\alpha( \beta_1,\dots,\beta_s) $,  by Theorem \ref{2.1},
$\chi^{\alpha^{(s)}_{\vec{a}}(\beta_1,\dots,\beta_s)}(\beta_{s+1},\dots,\beta_k)=0$. Therefore $\chi^\alpha(\beta)=0$.
\end{proof}
For $s=1$ in Theorem \ref{4.2}, we have in particular that:
\begin{cor}\label{4.3} Let $p$ be a prime, $a,c,l$ be positive integers such that $a\geq cp+1$ and $k$ be a non-negative integer such that $a+cp+2l+k+1=n$.
Let $\alpha,\beta\vdash n$ with $\a=(a,cp+1,2^l,1^k)$ and $\beta=(a+l+k+1,\gamma)$ with $\gamma\vdash cp+l=:m$.  If $p\nmid m$ and $\gamma$ is of $p$-vanishing then $\chi^\alpha(\beta)=0$.
\end{cor}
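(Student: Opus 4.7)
The plan is to apply Theorem \ref{4.2} with $s = 1$ and first part $\beta_1 = a + l + k + 1$, so that the tail of $\beta$ after $\beta_1$ is the partition $\gamma \vdash m = cp + l$. By hypothesis $\gamma$ is $p$-vanishing, and $m = cp + l \geq p + 1 > 1$, so the hypotheses of Theorem \ref{4.2} concerning the tail of $\beta$ are immediate. What must be verified is that for every $\vec{a} \in I^\alpha(\beta_1)$ the degree of the intermediate character $\chi^{\alpha^{(1)}_{\vec{a}}(\beta_1)}$ is divisible by $p$.

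For the hook-length analysis, I would note that $\alpha = (a, cp+1, 2^l, 1^k)$ has maximum possible hook length $\alpha_1 + \alpha^\top_1 - 1 = a + (l+k+2) - 1 = \beta_1$, attained at the corner $(1,1)$. A short case check shows uniqueness: the $(2,1)$-hook has length $cp + l + k + 1 < \beta_1$ since $a \geq cp+1$; for $(1,j)$ with $j \geq 2$, no column past the first is tall enough to compensate for the shortened arm; and hooks at $(i,j)$ with $i, j \geq 2$ are bounded above by the $(2,2)$-hook of length $cp + l$. Hence $I^\alpha(\beta_1) = \{(1,1)\}$, and removing the $(1,1)$-rim, which sweeps the entire outer border of $[\alpha]$ from $(1,a)$ through $(2,cp+1)$ down column $2$ to $(l+2,2)$, then continuing through $(l+2,1), (l+3,1), \ldots, (l+k+2,1)$, leaves the hook partition $\delta := (cp, 1^l) \vdash m$.

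It remains to show $p \mid \deg\chi^\delta = \binom{cp + l - 1}{l}$, which is the standard hook-character degree. Writing $l = qp + r$ with $0 \leq r < p$, the assumption $p \nmid m = cp + l$ forces $p \nmid l$, so $r \geq 1$. The units digits in base $p$ are then $r$ for $l$ and $r - 1$ for $cp + l - 1$, so Lucas' theorem yields $\binom{cp + l - 1}{l} \equiv 0 \pmod p$ because the unit-digit factor $\binom{r-1}{r}$ vanishes. With these verifications in place, Theorem \ref{4.2} gives $\chi^\alpha(\beta) = 0$. I expect this divisibility step to be the main obstacle: it is the sole place where the hypothesis $p \nmid m$ is used, and the rest of the argument reduces to straightforward bookkeeping on hook lengths and rim removal.
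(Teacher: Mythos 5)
Your proof is correct and follows essentially the same route as the paper: remove the unique hook of length $a+l+k+1$ at $(1,1)$ to get $(cp,1^l)$, note its degree is $\binom{cp+l-1}{l}$, and use the hypothesis $p\nmid m$ to force divisibility by $p$ before invoking Theorem \ref{4.2}. The only differences are cosmetic: you use Lucas' theorem on base-$p$ digits where the paper counts carries via Kummer's theorem, and you spell out the (correct) uniqueness of the hook of length $\beta_1$, which the paper leaves implicit.
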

\begin{proof} Since $h^\a_{1,1}=a+l+k+1$, we have that $$\a^{(1)}:=\a^{(1)}_{((1,1))}(h^\a_{1,1})=(cp,1^{l})$$ which is a partition of $m=n-h^\a_{1,1}=cp+l$ associated to the Young diagram obtained by removing the hook of lengths $h^\a_{1,1}$ out of $[\a]$.  The Frame-Robinson-Thrall Hook length formula implies that
$$\deg(\chi^{\a^{(1)}})=\chi^{\a^{(1)}}(1^m)=\frac{(cp+l)!}{(cp+l)(cp-1)!l!}=\frac{(cp-1+l)!}{(cp-1)!l!}=\left(
                                                                                                           \begin{array}{c}
                                                                                                             (cp-1)+l \\
                                                                                                             l \\
                                                                                                           \end{array}
                                                                                                         \right).
  $$
We now write $cp-1=e_sp^s+e_{s-1}p^{s-1}+\cdots+e_1p+e_0$ and $l=d_rp^r+d_{r-1}p^{r-1}+\cdots+d_1p+d_0$ as $p$-adic decompostitions.  Since $p\nmid m$, $p\nmid l$; i.e., $1\leq d_0 <p$.  Also, $e_0=p-1$.  Thus, the number of carries when $l$ is added to $cp-1$ in base $p$ is at least $1$.  By Kummer's Theorem,  $$\nu_p(\left(
                                                                                                           \begin{array}{c}
                                                                                                             (cp-1)+l \\
                                                                                                             l \\
                                                                                                           \end{array}
                                                                                                         \right))\geq 1,$$ Namely,  $p\mid \deg(\chi^{\a^{(1)}})$.  Hence, the result follows by Theorem \ref{4.2}.
\end{proof}

Moreover, for $s=2$ in Theorem \ref{4.2}, we also have in particular that:
\begin{cor}\label{4.4} Let $p$ be a prime, $a,b,c,l,t$ be positive integers such that $a\geq b\geq cp+2$ and $k$ be a non-negative integer such that $a+b+cp+3l+2t+k+2=n$.
Let $\alpha,\beta\vdash n$ with $\a=(a,b,cp+2,3^l,2^t,1^k)$ and $\beta=(\beta_1,\beta_2,\gamma)$ with $\gamma\vdash cp+l=:m$, where $\beta_1=a+l+t+k+2$ and $\beta_2=b+l+t$.   If $p\nmid m$ and $\gamma$ is of $p$-vanishing then $\chi^\alpha(\beta)=0$.
\end{cor}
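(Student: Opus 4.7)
The plan is to apply Theorem~\ref{4.2} with $s = 2$: the task splits into (i) determining $I^\alpha(\beta_1, \beta_2)$ and (ii) verifying that for every $\vec{a}$ in this set, the resulting partition $\alpha^{(2)}_{\vec{a}}(\beta_1, \beta_2)$ has degree divisible by $p$. For (i), I first check that $h^\alpha_{1,1} = \alpha_1 + \alpha_1^\top - 1 = a + (l + t + k + 3) - 1 = \beta_1$ and that $\beta_1$ is a strict maximum among the hook lengths of $[\alpha]$. With $\alpha_2^\top = l + t + 3$, the nearest competitors are $h^\alpha_{2,1} = b + l + t + k + 1$ and $h^\alpha_{1,2} = a + l + t + 1$, which fall short of $\beta_1$ by $a - b + 1 \geq 1$ and $k + 1 \geq 1$ respectively; all other hooks are smaller still. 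Thus $I^\alpha(\beta_1) = \{(1,1)\}$, and a direct inspection of the rim $R^\alpha_{1,1}$ along the outer boundary of $[\alpha]$ yields
$$\alpha^{(1)} := \alpha^{(1)}_{((1,1))}(\beta_1) = (b - 1,\ cp + 1,\ 2^l,\ 1^t).$$

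I then repeat the argument with $\alpha^{(1)}$ in place of $\alpha$. The largest hook length of $\alpha^{(1)}$ is $h^{\alpha^{(1)}}_{1,1} = (b-1) + (l + t + 2) - 1 = b + l + t = \beta_2$, and the nearest competitor $h^{\alpha^{(1)}}_{2,1} = cp + l + t + 1$ is strictly smaller since $b \geq cp + 2$; the remaining hooks are smaller still. Hence $I^{\alpha^{(1)}}(\beta_2) = \{(1,1)\}$, and the corresponding rim hook removal produces
$$\alpha^{(2)} := \alpha^{(2)}_{((1,1),(1,1))}(\beta_1, \beta_2) = (cp,\ 1^l).$$

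For (ii), exactly as in the proof of Corollary~\ref{4.3}, the Frame--Robinson--Thrall hook length formula gives $\deg(\chi^{\alpha^{(2)}}) = \binom{cp + l - 1}{l}$. Since $p \nmid m = cp + l$ forces $p \nmid l$, the last base-$p$ digit of $l$ is nonzero; combined with the fact that the last base-$p$ digit of $cp - 1$ is $p - 1$, Kummer's theorem yields $p \mid \binom{cp + l - 1}{l}$, that is, $p \mid \deg(\chi^{\alpha^{(2)}})$. Since $\gamma$ is $p$-vanishing by hypothesis and $I^\alpha(\beta_1, \beta_2)$ consists of the single sequence $((1,1),(1,1))$, Theorem~\ref{4.2} yields $\chi^\alpha(\beta) = 0$.

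The main obstacle I anticipate is the careful bookkeeping of the two rim hook removals---verifying that the successive shapes are exactly $(b - 1, cp + 1, 2^l, 1^t)$ and then $(cp, 1^l)$, and that at each stage the hook of the required length is realised uniquely at the corner---especially at boundary cases such as $k = 0$ or the minimal values $l = t = 1$, where rows collapse or merge in the rim computation. Once these shapes are pinned down, the reduction to Corollary~\ref{4.3}'s Kummer-theorem calculation is immediate.
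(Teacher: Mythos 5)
Your proposal is correct and follows essentially the same route as the paper: identify $h^\alpha_{1,1}=\beta_1$ and $h^{\alpha^{(1)}}_{1,1}=\beta_2$, remove these two hooks (uniquely, at the corner) to arrive at $\alpha^{(2)}=(cp,1^l)$, and then invoke the Kummer-theorem divisibility argument of Corollary \ref{4.3} together with Theorem \ref{4.2} for $s=2$. Your extra verification that the $(1,1)$-hook is the strict maximum at each stage (so that $I^\alpha(\beta_1,\beta_2)$ is a singleton) is a detail the paper leaves implicit, and it is accurate.
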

\begin{proof} Since $h^\a_{1,1}=a+l+t+k+2=\beta_1$, we have that $$\a^{(1)}:=\a^{(1)}_{((1,1))}(h^\a_{1,1})=(b-1,cp+1,2^{l},1^t)$$ which is a partition associated to the Young diagram obtained by removing the hook of lengths $\beta_1$ out of $[\a]$. Since
$\beta_2=b+l+t=h^{\a^{(1)}}_{1,1}$,  we have
 $$ \a^{(2)}:=\a^{(2)}_{((1,1),(1,1))}(\beta_1,\beta_2)=(cp,1^{l})\vdash m=n-\beta_1-\beta_2=cp+l$$ is a partition associated to the Young diagram obtained from  $[\alpha^{(1)}]$ by removing hooks of lengths $\beta_2$.   The same arguments as in the proof of Corollary \ref{4.3} can be used to complete the proof.
\end{proof}

\section{Zeros of self conjugate partitions}
Recall from Lemma 2.1.8 in \cite{GA}  that $\chi^{\a^\top}(\beta)=\chi^\a(\beta)$ if $\beta$ is even and $\chi^{\a^\top}(\beta)=-\chi^\a(\beta)$ if $\beta$ is odd.  Hence, if $\a^\top=\alpha$, then $\chi^\a(\beta)=0$ for any odd $\beta$.  However, there are some more zeros for self conjugate partitions.

\begin{prop}\label{4.12}\label{finalt}
 Let $\alpha=\alpha^\top$ and $\beta$ be partitions of $n$.  If $\beta_1$ is an even part of $\beta$ such that $\beta_1>\frac{n}{2}$ and $\gamma\vdash (n-\beta_1)$, then $\chi^\alpha(\beta_1,\gamma)=0.$
\end{prop}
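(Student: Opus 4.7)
The plan is to expand $\chi^\alpha(\beta_1,\gamma)$ via Murnaghan--Nakayama by peeling off the $\beta_1$-cycle first,
\[
\chi^\alpha(\beta_1,\gamma)=\sum_{(i,j)\in I^\alpha_{\beta_1}}(-1)^{l^\alpha_{i,j}}\chi^{\alpha\setminus R^\alpha_{i,j}}(\gamma),
\]
and then to show that under the stated hypotheses the index set $I^\alpha_{\beta_1}$ is in fact empty, so the sum vanishes vacuously. This avoids having to understand the smaller character values $\chi^{\alpha\setminus R^\alpha_{i,j}}(\gamma)$ altogether.

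First I would rule out diagonal cells. Because $\alpha=\alpha^\top$, the diagonal hook lengths are $h^\alpha_{i,i}=2(\alpha_i-i)+1$, all odd, and $\beta_1$ is even, so every $(i,j)\in I^\alpha_{\beta_1}$ satisfies $i\neq j$. Self-conjugacy also makes $I^\alpha_{\beta_1}$ stable under the involution $(i,j)\leftrightarrow(j,i)$, so its elements come in transpose pairs $\{(i,j),(j,i)\}$; I fix one with $i<j$.

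The heart of the argument is then a double counting on a single pair. A direct inspection of the arm and leg of each hook gives $H^\alpha_{i,j}\cap H^\alpha_{j,i}\subseteq\{(j,j)\}$, so
\[
|H^\alpha_{i,j}\cup H^\alpha_{j,i}|\;\geq\;2\beta_1-1.
\]
On the other hand, every cell of $H^\alpha_{i,j}$ has column index $\geq j\geq 2$ (the arm lies in columns $\geq j$, the leg lies in column $j$), and every cell of $H^\alpha_{j,i}$ has row index $\geq j\geq 2$, so neither hook contains the corner $(1,1)\in[\alpha]$. This yields $|H^\alpha_{i,j}\cup H^\alpha_{j,i}|\leq n-1$. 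Combining the two bounds gives $2\beta_1-1\leq n-1$, i.e.\ $\beta_1\leq n/2$, contradicting $\beta_1>n/2$. Hence $I^\alpha_{\beta_1}=\emptyset$ and $\chi^\alpha(\beta_1,\gamma)=0$.

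The main technical step I expect to be the overlap estimate $H^\alpha_{i,j}\cap H^\alpha_{j,i}\subseteq\{(j,j)\}$; once this is in hand the rest is just counting. It is worth recording that the strict inequality $\beta_1>n/2$ is essential: for $\alpha=(3,3,2)$ with $n=8$, the hooks $H^\alpha_{1,2}$ and $H^\alpha_{2,1}$ both have length $\beta_1=n/2=4$ and together cover $[\alpha]\setminus\{(1,1)\}$, so the estimate is sharp at the boundary.
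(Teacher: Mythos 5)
Your argument is correct, and it reaches the same intermediate goal as the paper -- showing that $[\alpha]$ simply contains no hook of length $\beta_1$, so the Murnaghan--Nakayama sum is empty -- but it gets there by a different mechanism. The paper works only at the corner cells: it observes that for $\alpha=\alpha^\top$ the unique largest hook length $h^\alpha_{1,1}$ is odd, that every other hook length is at most $h^\alpha_{1,2}=h^\alpha_{2,1}$, and that the $h^\alpha_{1,2}$-weight of $\alpha$ is $2$, so $2h^\alpha_{1,2}\le n$ and hence no even hook length can exceed $n/2$. You instead take an arbitrary cell of even hook length, note it must be off-diagonal (diagonal hooks $2(\alpha_i-i)+1$ are odd), pair it with its transpose, and count: $H^\alpha_{i,j}\cap H^\alpha_{j,i}\subseteq\{(j,j)\}$ while neither hook contains $(1,1)$, giving $2\beta_1-1\le n-1$. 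The inclusion and the two disjointness observations are easily checked from the definition of a hook (row $i$/column $j$ versus row $j$/column $i$, with $i<j\Rightarrow j\ge 2$), so the proof is complete. What your route buys is a clean, self-contained local statement -- in a self-conjugate diagram every even hook length is at most $n/2$ -- proved by pure counting, without invoking the notion of $h$-weight or the global ranking of hook lengths (and hence without the small-case caveats that the paper's claims about $h^\alpha_{1,2}$ implicitly require, e.g.\ that $\alpha$ has at least two rows and columns); the paper's route is shorter once one accepts the weight computation, and it additionally records that the only hook length exceeding $n/2$ is the odd one $h^\alpha_{1,1}$. Your boundary example $\alpha=(3,3,2)$, $\beta_1=4=n/2$ correctly shows the inequality $\beta_1>n/2$ cannot be weakened by this counting alone.
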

\begin{proof}
 Since $\alpha=\alpha^\top$,  $h^\alpha_{1,1}>h_{1,2}^\alpha=h_{2,1}^\alpha>h_{i,j}^\alpha$ for all $(i,j)\neq(1,1),(1,2),(2,1)$ and $h_{1,1}^\a$ is odd.
Thus, $w_{h_{1,2}^\alpha}(\alpha)=2$.  So $2h_{1,2}^\alpha\leq n$ and then $h_{1,2}^\alpha\leq\frac{n}{2}$.  Since $\beta_1$ is even and $h_{1,1}^\a$ is odd, $\beta_1\neq h^{\a}_{1,1}$.  Moreover, by the assumption that $\beta_1>\frac{n}{2}$, we have that  the Young diagram of $[\alpha]$ does not contain a hook of length $\beta_1$.  Hence $\chi^\alpha(\beta_1,\gamma)=0$.
\end{proof}
According to Proposition \ref{finalt}, if $\alpha$ is a self conjugate partition and $\beta$ is even, it does not necessary to have that $\chi^\a(\beta)\neq 0$; for example, if $\alpha=(13,5,2^3,1^8)$ which is self conjugate and $\beta=(20, 5, 2^3,1,)$ which is even, then $\chi^\a(\beta)=0$. \\

In the remaining, we concentrate only on self conjugate partitions $\alpha$ of $n$.   Let $\a=(r_1^{k_1},\dots,r_m^{k_m})$ with $r_1>\cdots> r_m$ and $k_i>0$ for $1\leq i\leq m$. Since $\a$ is self conjugate, we have that $r_i=\sum_{j=1}^{m-i+1}k_j$ for $1\leq i\leq m$ and then also that $k_i=r_{m-i+1}-r_{m-i+2}$ for $1\leq i\leq m$.   Let $s=\lceil m/2\rceil$, the ceiling function of $m/2$.  We also denote $\lfloor q \rfloor$ the floor function of the real number $q$.

For each $1\leq i,j\leq m$, we denote $A_{i,j}$ the $k_i\times k_j$ matrix whose its entries are hook lengths of $[\a]$ in the strip $(i,j)$ which is the set of nodes
$$\{(x,y)\in \mathbb{N}\times \mathbb{N} \,|\, k_1+\cdots +k_{i-1}<x\leq k_1+\cdots +k_{i} \hbox{ and } k_1+\cdots +k_{j-1}<y\leq k_1+\cdots +k_{j}\}  $$ in $[\a]$.  Namely, we can consider the set of all hook lengths of $[\a]$ as a block matrix $A$ in the form
$$A=\begin{bmatrix}
A_{1,1} & A_{1,2}&\dots&A_{1,m} \\
A_{2,1} & A_{2,2}&\dots&A_{2,m} \\
\vdots & \vdots&\ddots&\vdots \\
A_{m,1} & A_{m,2}&\dots&A_{m,m} \\
\end{bmatrix},
$$
with $A_{i,j}=0$ if $i+j\geq m+2$, in particular if $i,j\geq s+1$.   This matrix is symmetric because $\alpha$ is self conjugate.  Thus, it suffices to investigate the matrices $A_{i,j}$ with $1\leq i \leq s$ and $i\leq j \leq m$.  In the following, for positive integers $n_1<n_2$, we denote $[n_1,n_2]:=\{k\in \mathbb{Z} \,|\, n_1\leq k \leq n_2 \}$ and also define $[n_1,n_2]=\emptyset$ if $n_1>n_2$.  We also denote $e(A_{i,j})$ the set of all entries in $A_{i,j}$.  By direct computation, we have that, for each $1\leq i \leq s$ and $i\leq j \leq m$, $$e(A_{i,j})=[r_i+r_j-(\sum^{i}_{t=1}k_t)-(\sum^{j}_{t=1}k_t)+1,r_i+r_j-(\sum^{i-1}_{t=1}k_t)-(\sum^{j-1}_{t=1}k_t)-1].$$

Moreover, for each $1\leq i \leq \lfloor m/2 \rfloor$ and $1\leq j \leq m-1$ with $i\leq j$ and $i+j \leq m$, we denote
\begin{equation}\label{minmaxG1}
    G_{i+1,j+1}=[\max \big(e(A_{i+1,j+1})\big)+1,\min \big(e(A_{i,j})\big)-1]
\end{equation}
 and, for each $1\leq j\leq m$,
  \begin{equation}\label{minmaxG2}
    G_{1,j}=\big[{r_1+r_j-(\sum_{t=1}^{j-1}k_t)},n\big].
 \end{equation}
Note by the direct calculation that $G_{a,b}=\emptyset$ for $a+b\geq m+3$.  Then
\begin{equation}\label{eqGD}
    G_{i,i+j-1}=\bigg[r_{i}+r_{i+j-1}-(\sum^{i-1}_{t=1}k_t)-(\sum^{i+j-2}_{t=1}k_t),r_{i-1}+r_{i+j-2}-(\sum^{i-1}_{t=1}k_t)-(\sum^{i+j-2}_{t=1}k_t)\bigg],
\end{equation}
for all $1\leq i\leq  \lfloor  (m-j+3)/2\rfloor=:M_j$.
In the following results,  for each $1\leq j\leq m$, define
\begin{equation}\label{defgj}
    G_{j}:= \bigcup_{i=1}^{M_j} G_{i,i+j-1}.
\end{equation}
Note from (\ref{eqGD}) that the union in (\ref{defgj})  is a disjoint union; namely, $G_{i,i+j-1}\cap G_{k,k+j-1}=\emptyset$ if $i\neq k$.  Moreover, for each $j$, we have that $\min(G_{i,i+j-1})>\max(G_{i+1,i+j})$ for each $i=1,\dots,M_j-1$ and thus the smallest element of $G_j$ belongs to $G_{M_j,M_j+j-1}$.  Let $H^\a$ be the set of all hook lengths of $[\a]$.
\begin{prop}\label{5.13}
For a self conjugate partition $\a$ of $n$ and $1\leq x\leq n$ integer, we have that  $x \notin H^{\a} $ if and only if $x\in G(\alpha)$, where $G(\alpha):= G_{1}\cap G_{2}\cap\cdots\cap G_{m} $.
\end{prop}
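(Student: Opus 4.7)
The plan is to show that the hook-length set $H^\alpha$ decomposes as a disjoint union over the ``upper-triangular" blocks $A_{i,i+j-1}$ (with $1\le j\le m$ and $1\le i\le N_j:=\lfloor(m-j+2)/2\rfloor$), and that each $G_j$ is precisely the complement in $[1,n]$ of the hook lengths appearing on the $j$-th block-diagonal. Intersecting over $j$ then produces exactly $[1,n]\setminus H^\alpha$.

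For the first (easier) part, I would verify that within each nonempty block the hook lengths form a contiguous interval. Self-conjugacy of $\alpha=(r_1^{k_1},\dots,r_m^{k_m})$ forces $\alpha_x=r_i$ and $\alpha^\top_y=r_j$ for any node $(x,y)$ in the strip $(i,j)$, so
$$h^\alpha_{x,y}=(r_i-y)+(r_j-x)+1.$$
As $(x,y)$ ranges over the rectangular strip, this linear function attains every integer value in the interval $e(A_{i,j})$ stated in the paper, establishing that $e(A_{i,j})$ is a set of consecutive integers for every nonzero $A_{i,j}$.

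Next, I would fix $j$ and analyze a single diagonal of blocks $A_{i,i+j-1}$ for $i=1,\dots,N_j$. Using the self-conjugate identity $r_i-r_{i+1}=k_{m-i+1}$, a direct subtraction from the endpoints of the intervals yields
$$\min e(A_{i,i+j-1})-\max e(A_{i+1,i+j})=k_{m-i+1}+k_{m-i-j+2}+2>0,$$
so these intervals are pairwise disjoint and strictly decreasing in $i$. The internal gaps between them are then exactly the $G_{i+1,i+j}$ from (\ref{minmaxG1}), the top gap $[\max e(A_{1,j})+1,n]$ is $G_{1,j}$ from (\ref{minmaxG2}), and a parity check on $m-j$ handles the bottom gap: when $m-j$ is even one checks directly from the explicit formulas that $\min e(A_{N_j,N_j+j-1})=1$, so no extra term is needed and $M_j=N_j$; when $m-j$ is odd, $M_j=N_j+1$ and $G_{M_j,M_j+j-1}$ via (\ref{eqGD}) correctly produces the bottom interval $[1,\min e(A_{N_j,N_j+j-1})-1]$ after restricting to $[1,n]$. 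Combining the three kinds of gaps gives $G_j=[1,n]\setminus\bigcup_{i=1}^{N_j}e(A_{i,i+j-1})$.

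Finally, since the strips $\{(i,j):i+j\le m+1\}$ partition the Young diagram $[\alpha]$ and symmetry of the block matrix $A$ (from self-conjugacy) lets us use only the upper-triangular blocks when computing $H^\alpha$ as a set, we obtain $H^\alpha=\bigcup_{j=1}^m\bigcup_{i=1}^{N_j}e(A_{i,i+j-1})$. Taking complements in $[1,n]$ and distributing the intersection gives
$$[1,n]\setminus H^\alpha\;=\;\bigcap_{j=1}^m\Big([1,n]\setminus\bigcup_{i=1}^{N_j}e(A_{i,i+j-1})\Big)\;=\;\bigcap_{j=1}^m G_j\;=\;G(\alpha),$$
which is the desired equivalence. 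The main technical obstacle is the bottom-gap bookkeeping in the odd-parity case: one must verify from the explicit formula (\ref{eqGD}) at $i=M_j$ that the resulting interval has upper endpoint exactly $\min e(A_{N_j,N_j+j-1})-1$ and lower endpoint at most $1$, so that after intersecting with $[1,n]$ it fills the bottom gap precisely, and separately confirm that no bottom gap exists when $m-j$ is even using the identity $r_i=\sum_{t=1}^{m-i+1}k_t$.
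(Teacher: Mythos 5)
Your proposal is correct and takes essentially the same route as the paper: it identifies each $G_j$ as the complement in $[1,n]$ of the hook lengths occurring on the $j$-th block diagonal of $A$ and then intersects over $j$, which is exactly the alternating interval/gap structure the paper's two-direction argument relies on. You merely verify in more detail facts the paper asserts without proof (contiguity of each $e(A_{i,j})$, disjointness of the blocks along a diagonal, and the $M_j$ versus $N_j$ parity bookkeeping for the bottom gap), all of which check out.
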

\begin{proof}
 Suppose that $x\in G(\a)$.
  For each $j\in {1,2,\dots,m}$, there exists $i\in\{1,2,\dots,M_j\}$ such that $x\in G_{i,i+j-1}$.
By (\ref{minmaxG1}) and (\ref{minmaxG2}), we see that $x>\max(e(A_{i,i+j-1}))$ and $x<\min(e(A_{i-1,i+j-2}))$. This implies that $ x\not \in  e(A_{i+t,i+j-1+t})$ for each $0\leq t\leq M_j-i$ and $x\not\in e(A_{i-t,i+j-1-t})$ for each $1\leq t\leq i-1$.  Namely, $x$ does not appear in the $j$-diagonal (South-East) strip $$\bigcup^{M_j}_{i=1} e(A_{i,i+j-1})$$ of $A$.  Since $j$ is arbitrary, $x$ is not an entry of $A$ which means that $x \notin H^{\a} $.

   On the other hand,  suppose that  $x\not\in G(\a) $.  Then there exists $j\in\{1,2,\dots,m\}$ such that $x\not\in G_j$.  Note that, for each $j=1,\dots,m$, $$G_j\cup \bigcup^{M_j}_{i=1} e(A_{i,i+j-1})=\{1,\dots,n\}.$$  Then $x\in e(A_{i,i+j-1})$ for some $1\leq i\leq M_j$, and hence $x\in H^{\a}$.
\end{proof}

By the distributive law of sets and the definition of $G_j$'s above, we have that
  \begin{equation}\label{intersectionofG}
    G(\a)= \bigcup (G_{i_1,1+i_1-1}\cap G_{i_2,2+i_2-1}\cap\cdots \cap G_{i_{m},m+i_m-1}),
\end{equation}
where the union runs over the set $$I:=\{(i_1,i_2,\dots,i_m) \in \mathbb{Z}^m \,|\,  1\leq i_j\leq M_j \hbox{ for each } 1\leq j\leq m\}.$$
 There are exactly
$$\prod_{j=1} ^m M_j$$
terms in the union form of $G(\alpha)$ in (\ref{intersectionofG}).   Since $G_{1,1}\cap G_{1,2}\cap \cdots\cap G_{1,m}=[2r_1,n]$, there are at most $2r_1-1$ non-empty terms in (\ref{intersectionofG}).
\begin{prop}\label{emptynode} Let $Y=G_{i_1,1+i_1-1}\cap G_{i_2,2+i_2-1}\cap\cdots \cap G_{i_{m},m+i_m-1}$ be a term in the union form of $G(\a)$ in (\ref{intersectionofG}).
\begin{enumerate}
  \item If there exists $1\leq l \leq m-1$ such that $i_l-i_{l+1}\geq 2$ or $i_{l+1}-i_l\geq 1$, then $Y=\emptyset$.
   \item If there exist $1\leq l \leq m-2$ such that $i_l=i_{l+1}=i_{l+2}$ and $r_{i_l}+r_{i_l+l-1}\geq r_{i_l-1}+r_{i_l+l}-k_{i_l+l}-k_{i_l+l-1}$, then $Y=\emptyset$.
  \item If there exist $1\leq l \leq m-2$ such that $i_{l}=i_{l+1}+1=i_{l+2}+2$ and $r_{i_{l+2}}+r_{i_{l+2}+l+1}\geq r_{i_{l+1}}+r_{i_{l+2}+l}-k_{i_{l+1}}-k_{i_{l+2}}$, then $Y=\emptyset$.
\end{enumerate}
\end{prop}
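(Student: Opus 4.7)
The plan is to show that, in each of the three cases, one can identify two among the intervals $P_j := G_{i_j,\,j+i_j-1}$ that are disjoint; since $Y$ is by definition the intersection of all the $P_j$'s, this forces $Y = \emptyset$. I first record the bounds. By (\ref{eqGD}), write $P_j = [a_j, b_j]$ with
\[ a_j = r_{i_j} + r_{j+i_j-1} - S_{i_j-1} - S_{j+i_j-2},\qquad b_j = r_{i_j-1} + r_{j+i_j-2} - S_{i_j-1} - S_{j+i_j-2}, \]
where $S_p := \sum_{t=1}^{p} k_t$; the boundary case $i_j=1$ is handled by (\ref{minmaxG2}) in an analogous manner. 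In each part I then compute the gap between the lower bound of one chosen interval and the upper bound of another, and show this gap is nonnegative.

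For Part (1), I compare $P_l$ and $P_{l+1}$. When $i_{l+1}=i_l+c$ with $c\geq 1$, a direct expansion gives
\[ a_l - b_{l+1} = (r_{i_l}-r_{i_l+c-1}) + (r_{l+i_l-1}-r_{l+i_l+c-1}) + (S_{i_l+c-1}-S_{i_l-1}) + (S_{l+i_l+c-1}-S_{l+i_l-2}), \]
a sum of four nonnegative terms (the $r_p$ are strictly decreasing, and each $k_t$ with $1\leq t\leq m$ satisfies $k_t\geq 1$) that contains the strictly positive summand $k_{i_l}\geq 1$. The symmetric case $i_l - i_{l+1} = c \geq 2$ follows identically by expanding $a_{l+1}-b_l$ in the same way.

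For Parts (2) and (3) the strategy is the same but the disjoint pair is $P_l$ and $P_{l+2}$. In Part (2), with $i := i_l = i_{l+1} = i_{l+2}$, the three intervals are $G_{i,i+l-1}$, $G_{i,i+l}$, $G_{i,i+l+1}$, and a direct expansion yields
\[ a_l - b_{l+2} = (r_i - r_{i-1}) + (r_{i+l-1} - r_{i+l}) + k_{i+l-1} + k_{i+l}, \]
which is precisely a rearrangement of the stated hypothesis, so $a_l \geq b_{l+2}$. In Part (3), with $i := i_l$ so that $i_{l+1} = i-1$ and $i_{l+2} = i-2$, the three intervals all share second index $i+l-1$, and the analogous expansion gives
\[ a_{l+2} - b_l = (r_{i-2} - r_{i-1}) + (r_{i+l-1} - r_{i+l-2}) + k_{i-1} + k_{i-2}, \]
which is again the stated hypothesis, so $a_{l+2} \geq b_l$. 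In each part, the hypothesis thus yields $P_l$ and $P_{l+2}$ disjoint (as integer intervals), and hence $Y = \emptyset$.

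The main obstacle is the careful algebraic bookkeeping: verifying that each simplification produces exactly the claimed combination of $r$-differences and $k_t$'s, handling the boundary convention for $i_j = 1$ (where (\ref{minmaxG2}) replaces (\ref{eqGD}) and $r_0$ is undefined), and confirming that the range constraints $i_j \leq M_j$ guarantee every index of $k_t$ appearing in the positive summands lies in $[1, m]$ so that $k_t \geq 1$. The key conceptual insight is that the hypotheses in Parts (2) and (3) were engineered to be exactly the condition $a_l \geq b_{l+2}$ (resp.\ $a_{l+2} \geq b_l$), which makes the disjointness step essentially a tautology once the bounds are written out.
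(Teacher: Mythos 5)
Your proof is correct and takes essentially the same approach as the paper's: both use (\ref{eqGD}) to read off the endpoints of the intervals $G_{i_j,\,j+i_j-1}$ and exhibit a disjoint pair, comparing $G_{i_l,i_l+l-1}$ with $G_{i_{l+1},i_{l+1}+l}$ in part (1) and $G_{i_l,i_l+l-1}$ with $G_{i_{l+2},i_{l+2}+l+1}$ in parts (2)--(3), which is exactly the binding comparison underlying the paper's computation of the triple intersection. The one caveat, inherited from the statement and equally present in the paper's own proof (which establishes non-emptiness precisely under the reverse weak inequality), is that when equality holds in the hypotheses of parts (2)--(3) the two integer intervals still meet in a single point, so the claimed disjointness really requires the strict inequality.
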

\begin{proof}  Suppose that there exits $1\leq l \leq m-1$ such that $i_l-i_{l+1}\geq 2$ or $i_{l+1}-i_l\geq 1$.  If $i_l-i_{l+1}\geq 2$, then $i_l-1>i_{l+1}$ and $i_{l+1}+l\leq i_l+l-2$.  Thus $r_{i_l-1}< r_{i_{l+1}}$ and $r_{i_l+l-2}\leq r_{i_{l+1}+l}$.   By (\ref{eqGD}), it is now straightforward to conclude that    $$\max(G_{i_l,i_l+l-1})<\min(G_{i_{l+1},i_{l+1}+l}) $$
 which means that $G_{i_l,i_l+l-1}\cap G_{i_{l+1},i_{l+1}+l}=\emptyset$ and hence $Y=\emptyset$.  If $i_{l+1}-i_l\geq 1$, then $i_{l+1}>i_l$ and $i_{l+1}-1\geq i_l$.  Thus $r_{i_{l+1}-1}\leq r_{i_l}$ and $r_{i_{l+1}+l-1}<r_{i_l+l-1}$.  By (\ref{eqGD}), it is now straightforward to conclude that   $$\min(G_{i_l,i_l+l-1})>\max(G_{i_{l+1},i_{l+1}+l}) $$
 which means that  $G_{i_1,i_1+1-1}\cap G_{i_2,i_2+2-1}=\emptyset$ and hence $Y=\emptyset$.

Suppose that there exist $1\leq l \leq m-2$ such that $i_l=i_{l+1}=i_{l+2}$.  By (\ref{eqGD}), we have that $$\min(G_{i_l,i_l+l-1})>\min(G_{i_{l+1},i_{l+1}+l})>\min(G_{i_{l+2},i_{l+2}+l+1}),$$
and
$$\max(G_{i_l,i_l+l-1})>\max(G_{i_{l+1},i_{l+1}+l})>\max(G_{i_{l+2},i_{l+2}+l+1}).$$
So, $$G_{i_l,i_l+l-1}\cap G_{i_{l+1},i_{l+1}+l} \cap G_{i_{l+2},i_{l+2}+l+1} =[\min(G_{i_l,i_l+l-1}), \max(G_{i_{l+2},i_{l+2}+l+1})].$$
This set is non-empty when $r_{i_l}+r_{i_l+l-1}\leq r_{i_l-1}+r_{i_l+l}-k_{i_l+l}-k_{i_l+l-1}$.  Similar arguments can be applied to conclude the remaining.

\end{proof}

According to Proposition \ref{emptynode}, a possibly nonempty set $$Y=\bigcap^m _{k=1}G_{i_k,k+i_k-1}$$
in the union form of $G(\a)$ in  (\ref{eqGD})  must satisfy the conditions $i_k-i_{k+1}\in \{ 0,1\}$ for each $k=1,\dots,m$.  In other words, the non-empty set  $Y$  is the intersection of nodes, $G_{i_k,k+i_k-1}$'s in a North-East ladder (possibly with different steps) of $[\a]$.

For each $1\leq v \leq \lfloor \frac{m}{2} \rfloor+1$, let $L^N_v$ be the one step $v$th North-East ladder starting in the North direction of $[\a]$ defined by $$L^N_{v}:=G_{v,v}\cap G_{v-1,v}\cap G_{v-1,v+1}\cap G_{v-2,v+1}\cap \cdots \cap G_{2,2v-2}\cap G_{1,2v-2}\cap G_{1,2v-1}\cdots\cap G_{1,m}. $$
Note that $G_{1,i}\cap G_{1,k}=G_{1,i}$ if $i<k$.  Thus,
\begin{eqnarray*}
L^N_{v}&=&\big(\bigcap_{ 0\leq i< v-1}(G_{v-i,v+i}\cap G_{(v-1)-i,v+i})\big)\cap\big(\bigcap_{i\geq 2v-1}G_{1,i}\big)\\
&=&\bigcap_{ 0\leq i< v-1}(G_{v-i,v+i}\cap G_{(v-1)-i,v+i})
\end{eqnarray*} for all $ 2\leq v\leq \lfloor \frac{m}{2} \rfloor+1$, and $L^N_1={G_{1,1}}$.

Similarly, for each $1\leq v \leq \lfloor \frac{m+1}{2} \rfloor$, let $L^E_v$ be the one step $v$th North-East ladder starting in the East direction of $[\a]$ defined by $$L^E_{v}:=G_{v,v}\cap G_{v,v+1}\cap G_{v-1,v+1}\cap G_{v-1,v+2}\cap \cdots \cap G_{2,2v-1}\cap G_{1,2v-1}\cap G_{1,2v}\cdots\cap G_{1,m}. $$
Using the same arguments as above we have that
$$ L^E_v=\bigcap_{ 0\leq i\leq v-1}(G_{v-i,v+i}\cap G_{v-i,v+1+i}), $$
for all $ 2\leq v\leq \lfloor \frac{m+1}{2} \rfloor$, and $L^E_1={G_{1,1}}$. \\

For the following results, for each $2\leq v \leq \lfloor \frac{m}{2} \rfloor+1$, we denote
 \begin{eqnarray*}
a^N_v&=&\max \big\{r_{v-i}+r_{v+i}-(\sum^{v-i-1}_{t=1}k_t)-(\sum^{v+i-1}_{t=1}k_t)\,\mid\, 0\leq i<v-1\big\} , \\
b^N_v&=&\min \big\{r_{v-i-1}+r_{v+i-1}-(\sum^{v-i-1}_{t=1}k_t)-(\sum^{v+i-1}_{t=1}k_t)\,\mid\, 0\leq i<v-1\big\} , \\
c^N_v&=&\max\big\{r_{v-i-1}+r_{v+i}-(\sum^{v-i-2}_{t=1}k_t)-(\sum^{v+i-1}_{t=1}k_t)\,\mid\, 0\leq i<v-1\big\},  \\
d^N_v&=&\min\big\{r_{v-i-2}+r_{v+i-1}-(\sum^{v-i-2}_{t=1}k_t)-(\sum^{v+i-1}_{t=1}k_t)\,\mid\, 0\leq i<v-1\big\},  \\
 \end{eqnarray*}
and for each $2\leq v \leq \lfloor \frac{m+1}{2} \rfloor$, we denote
\begin{eqnarray*}
a^E_v&=&\max \big\{r_{v-i}+r_{v+i}-(\sum^{v-i-1}_{t=1}k_t)-(\sum^{v+i-1}_{t=1}k_t)\,\mid\, 0\leq i \leq v-1\big\} , \\
b^E_v&=&\min \big\{r_{v-i-1}+r_{v+i-1}-(\sum^{v-i-1}_{t=1}k_t)-(\sum^{v+i-1}_{t=1}k_t)\,\mid\, 0\leq i \leq v-1\big\} , \\
c^E_v&=&\max\big\{r_{v-i}+r_{v+i+1}-(\sum^{v-i-1}_{t=1}k_t)-(\sum^{v+i}_{t=1}k_t)\,\mid\, 0\leq i \leq v-1\big\},  \\
d^E_v&=&\min\big\{r_{v-i-1}+r_{v+i}-(\sum^{v-i-1}_{t=1}k_t)-(\sum^{v+i}_{t=1}k_t)\,\mid\, 0\leq i \leq v-1\big\}.  \\
 \end{eqnarray*}
Here, we set any sum containing $r_0$ or $k_{m+1}$ to be $n$ and set $r_{m+1}=0$.

\begin{thm}\label{mainscp}  Let $\a=({r_{1}^{k_1}},{r_{2}^{k_2}},\dots,{r_{s}^{k_s}},{r_{s+1}^{k_{s+1}}}\dots,{r_{m}^{k_{m}}})$ be a self conjugate partition.  If $\beta$ is a partition of $n$ and $\beta$ has a part $$x\in[\max\{ a^N_v,c^N_v\},\min\{b^N_v,d^N_v \}]\cup [\max\{ a^E_v,c^E_v\},\min\{b^E_v,d^E_v \}],$$ for some $ v\geq 2$, then $\chi^\a(\beta)=0$.
\end{thm}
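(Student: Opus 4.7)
The plan is to reduce the claim to a question about hook-length membership, and then to recognize the two intervals in the statement as particular ladder subsets of the gap set $G(\a)$ introduced before the theorem. Since $\chi^\a$ is a class function, I may reorder the cycles of $\beta$ so that the prescribed part $x$ appears first. Applying the Murnaghan--Nakayama formula to this $x$-cycle gives
\[
\chi^\a(\beta)=\sum_{(i,j)\in I^\a_x}(-1)^{l^\a_{i,j}}\chi^{\a\setminus R^\a_{i,j}}(\rho),
\]
and this sum is empty precisely when $x\notin H^\a$, in which case $\chi^\a(\beta)=0$. By Proposition~\ref{5.13}, the condition $x\notin H^\a$ is equivalent to $x\in G(\a)$; hence it suffices to place $x$ in $G(\a)$.

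The main computation is to show that the two intervals in the statement are exactly the ladder sets $L^N_v$ and $L^E_v$. Using
\[
L^N_v=\bigcap_{0\le i<v-1}\bigl(G_{v-i,v+i}\cap G_{v-1-i,v+i}\bigr),
\]
each factor is an interval whose endpoints are read directly from (\ref{eqGD}) with $j=2i+1$ for the first family and $j=2i+2$ for the second. A finite intersection of integer intervals is again an interval, with left endpoint the maximum of the left endpoints and right endpoint the minimum of the right endpoints. Matching these extrema against the definitions of $a^N_v,b^N_v,c^N_v,d^N_v$ yields $L^N_v=[\max\{a^N_v,c^N_v\},\min\{b^N_v,d^N_v\}]$. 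The analogous calculation with the East ladder gives $L^E_v=[\max\{a^E_v,c^E_v\},\min\{b^E_v,d^E_v\}]$.

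To finish, I exhibit $L^N_v$ and $L^E_v$ as particular summands appearing in the union expansion (\ref{intersectionofG}) of $G(\a)$: the $L^N_v$ summand is indexed by the zig-zag tuple $(i_1,i_2,\ldots,i_m)\in I$ with $i_1=v$, $i_{2k}=i_{2k+1}=v-k$ for $1\le k\le v-1$, and $i_j=1$ for $j\ge 2v-1$, and the $L^E_v$ summand is indexed by the analogous East tuple $i_{2k-1}=i_{2k}=v-k+1$ for $1\le k\le v$ with $i_j=1$ for $j\ge 2v+1$. Consequently $L^N_v,\,L^E_v\subseteq G(\a)$, so whenever $x$ belongs to either interval one has $x\in G(\a)$, and the first paragraph gives $\chi^\a(\beta)=0$. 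The main obstacle is pure indexing bookkeeping: correctly aligning the summation limits in the definitions of $a^N_v,\ldots,d^E_v$ with the endpoint formula (\ref{eqGD}), and handling the edge cases in which $r_0$ or $k_{m+1}$ appear by invoking the conventions $r_{m+1}=0$ and that any sum containing $r_0$ or $k_{m+1}$ equals $n$.
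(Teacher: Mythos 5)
Your proposal is correct and follows essentially the same route as the paper: identify the interval $[\max\{a^\varepsilon_v,c^\varepsilon_v\},\min\{b^\varepsilon_v,d^\varepsilon_v\}]$ with the ladder set $L^\varepsilon_v$ via the endpoint formula (\ref{eqGD}), observe that $L^\varepsilon_v\subseteq G(\a)$ (one factor from each $G_j$), and invoke Proposition~\ref{5.13} together with the Murnaghan--Nakayama formula to get $\chi^\a(\beta)=0$. Your extra bookkeeping (explicit zig-zag index tuples and the initial reordering of $\beta$) only spells out steps the paper leaves implicit.
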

\begin{proof} Let $\varepsilon \in \{ N,E\}$.  Assume the assumption and consider $L^\varepsilon_v$ as $T^\varepsilon_v\cap B^\varepsilon_v$, where  $$T^N_v=\bigcap_{ 0\leq i< v-1}G_{v-i,v+i},\quad B_v^N= \bigcap_{ 0\leq i< v-1} G_{(v-1)-i,v+i},$$  and $$T^E_v=\bigcap_{ 0\leq i\leq v-1}G_{v-i,v+i}, \quad B_v^E=\bigcap_{ 0\leq i\leq v-1}G_{v-i,v+1+i}.$$
It is a direct computation from (\ref{eqGD}) that $a^\varepsilon=\min(T^\varepsilon_v)$,  $b^\varepsilon=\max(T^\varepsilon_v)$,  $c^\varepsilon=\min(B^\varepsilon_v)$ and $d^\varepsilon=\max(B^\varepsilon_v)$.  Then $L^\varepsilon_ v= [\max\{ a^\varepsilon_v,c^\varepsilon_v\},\min\{b^\varepsilon_v,d^\varepsilon_v \}]$.   Note that $L^\varepsilon_v\subseteq G(\a)$ and then, by Proposition \ref{5.13}, $x\notin H^\alpha$ for any $x\in L^\varepsilon_v$.  By Murnaghan-Nakayama formula, we complete the proof.

 \end{proof}
 Note that $L^\varepsilon_1=G_{1,1}=[2r_1,n]$.  Thus, if $\beta$ contains a part $x\geq 2r_1$, then $\chi^\a(\beta)=0$, by Murnaghan-Nakayama formula.  For $v=2$ and $m\geq 2$, we compute that $a^N_2=2(r_2-k_1), \, b^N_2=2(r_1-k_1), \, c^N_2=r_1+r_2-k_1$ and $d^N_2=n$.   Therefore, the following is immediate.
 \begin{cor} Let $\a=({r_{1}^{k_1}},{r_{2}^{k_2}},\dots,{r_{s}^{k_s}},{r_{s+1}^{k_{s+1}}}\dots,{r_{m}^{k_{m}}})$ be a self conjugate partition of $n$ with $m\geq 2$.  Then $\chi^\a(\beta)=0$ for all partition $\beta$ of $n$ containing a part $x\in [\max\{2(r_2-k_1),r_1+r_2-k_1 \},2(r_1-k_1)]$.
 \end{cor}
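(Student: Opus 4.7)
The plan is to apply Theorem~\ref{mainscp} with $v = 2$ and the North case $\varepsilon = N$. First I would check the admissibility of this choice: since the hypothesis $m \geq 2$ gives $2 \leq \lfloor m/2 \rfloor + 1$, the ladder $L^N_2$ is legitimately defined and the constants $a^N_2, b^N_2, c^N_2, d^N_2$ are available for use.

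Next I would compute these four constants directly from their definitions. Because $v - 1 = 1$, the index range $0 \leq i < v-1$ collapses to the single value $i = 0$, so every max and min is trivial. Substituting $i = 0$ into each formula yields $a^N_2 = 2(r_2 - k_1)$, $b^N_2 = 2(r_1 - k_1)$, and $c^N_2 = r_1 + r_2 - k_1$. The computation of $d^N_2$ invokes the convention stated just before the theorem: since the $i=0$ expression contains $r_0$, it is set equal to $n$, giving $d^N_2 = n$.

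Plugging these values into the conclusion of Theorem~\ref{mainscp}, the guaranteed vanishing interval in the North branch becomes
\[ \bigl[\max\{2(r_2 - k_1),\, r_1 + r_2 - k_1\},\, \min\{2(r_1 - k_1),\, n\}\bigr]. \]
To match the interval in the statement, I would show that $\min\{2(r_1 - k_1), n\} = 2(r_1 - k_1)$. This is immediate from self-conjugacy: the principal hook length satisfies $h^\a_{1,1} = 2r_1 - 1 \leq n$, and since $k_1 \geq 1$ we conclude $2(r_1 - k_1) \leq 2r_1 - 2 < 2r_1 - 1 \leq n$.

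The only genuinely subtle point in the argument is correctly unwinding the $r_0$ convention to confirm $d^N_2 = n$ rather than some smaller quantity; once this is handled, the corollary is a direct specialization of Theorem~\ref{mainscp} (using only its North branch, so that the $E$ part of the union can be discarded), and presents no further obstacle.
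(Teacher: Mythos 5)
Your proposal is correct and matches the paper's argument: the paper likewise obtains the corollary by specializing Theorem~\ref{mainscp} to $v=2$ in the North branch, computing $a^N_2=2(r_2-k_1)$, $b^N_2=2(r_1-k_1)$, $c^N_2=r_1+r_2-k_1$, $d^N_2=n$. Your extra remark that $2(r_1-k_1)\leq n$ (via $h^\a_{1,1}=2r_1-1\leq n$) just makes explicit why $\min\{b^N_2,d^N_2\}=2(r_1-k_1)$, which the paper leaves implicit.
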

   
   Moreover, the smaller $x$ belongs to $G(\a)$, the larger number of zero occurs in the row $\chi^\a$ (for any self conjugate partition $\a$).  The smallest element of $G(\a)$ belongs to $L^N_{s+1}$ for $m=2s$ or belongs to $L^E_{s}$ for $m=2s-1$, if they are not empty.
   
   The following corollaries are illustration of the usage of Theorem \ref{mainscp}.  The partition forms in the corollaries are found under the assumption that  $L^N_{s+1}$ for $m=2s$ or $L^E_{s}$ for $m=2s-1$ is not empty in some specific conditions.  However, after the form of $\a$ is explicit, there are several different ways to conclude the corollaries by Murnaghan-Nakayama formula as well.

 \begin{cor} Let $s,x,y$ be positive integers with $x\leq y$ and $s\geq 2$.  Let $$\a=((sx+sy)^x, (sx+(s-1)y)^x,\dots, (sx+y)^x,(sx)^y,((s-1)x)^y,\dots,x^y),$$
 and $\beta$ be partitions of $n=sx(s(x+y)+y)$.  If $\beta$ contains a part $x+y$ or $2(x+y)$, then $\chi^\a(\beta)=0$.
 \end{cor}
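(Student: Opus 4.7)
The plan is to apply Theorem \ref{mainscp} with $m=2s$ at two different ladder levels: the north ladder $L^N_{s+1}$ handles the part $x+y$, and the east ladder $L^E_s$ handles the part $2(x+y)$. First I would extract the $(r_i,k_i)$ data from the stated form of $\alpha$: for $1\le i\le s$, $r_i=sx+(s-i+1)y$ with $k_i=x$, and for $s+1\le i\le 2s$, $r_i=(2s-i+1)x$ with $k_i=y$. The self-conjugacy identity $r_i=\sum_{j=1}^{2s-i+1}k_j$ is then a direct check.

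Next I would substitute these formulas into the definitions of $a^N_{s+1},b^N_{s+1},c^N_{s+1},d^N_{s+1}$; the $x$-coefficients telescope, giving $a^N_{s+1}=0$ and $c^N_{s+1}=x+y$ (each attained for every admissible $i$), together with $b^N_{s+1}=x+y$ (from the $i=s-1$ term, while $2y\ge x+y$ at $i=0$ since $y\ge x$) and $\min\{b^N_{s+1},d^N_{s+1}\}=x+y$ (since $d^N_{s+1}$ is $x+3y$ for $s=2$ and $2(x+y)$ for $s\ge 3$, both $\ge x+y$). Thus $x+y\in L^N_{s+1}$. The analogous substitution at $v=s$, $\varepsilon=E$ yields $a^E_s=2(x+y)$ (attained uniformly in $i$), $c^E_s=x+y$, and both $b^E_s,d^E_s\ge 2(x+y)$ (using $y\ge x$ throughout, including the small cases $s=2,3$ where the index range for $b^E_s$ shrinks but leaves $b^E_s=2x+4y\ge 2(x+y)$), giving $2(x+y)\in L^E_s$. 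Theorem \ref{mainscp} then forces $\chi^\alpha(\beta)=0$ whenever $\beta$ contains either part.

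The main obstacle is the lengthy bookkeeping in the four extrema, especially verifying each $y\ge x$ comparison and handling small-$s$ boundary cases where the $r_0=n$ convention activates. A more conceptual shortcut avoids Theorem \ref{mainscp} altogether: a direct computation of $e(A_{a,b})$ gives $[(2s-a-b+1)(x+y)+1,(2s-a-b+2)(x+y)-1]$ for $a\le s<b$, so every multiple of $x+y$ is a gap between consecutive block intervals, while blocks with $a,b\le s$ have lower bound $(2s-a-b)x+(2s-a-b+2)y+1\ge(2s-a-b+1)(x+y)+1$ (using $y\ge x$) and thus never intrude on those gaps. This shortcut in fact proves $j(x+y)\notin H^\alpha$ for every $1\le j\le s$, which is strictly stronger than the two cases required, and the Murnaghan--Nakayama formula then closes the proof in either presentation.
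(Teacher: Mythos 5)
Your proposal is correct and takes essentially the same approach as the paper: you compute $a^N_{s+1},b^N_{s+1},c^N_{s+1},d^N_{s+1}$ and $a^E_s,b^E_s,c^E_s,d^E_s$ for this self conjugate partition, check via $x\le y$ that $x+y$ lies in the North interval and $2(x+y)$ in the East interval, and invoke Theorem \ref{mainscp}, exactly as the paper does. (Your value $b^E_2=2x+4y$ differs from the paper's stated $x+5y$ for $s=2$, but both exceed $2(x+y)$ when $x\le y$, so this is immaterial; the direct hook-length shortcut you sketch is a valid bonus but not the paper's argument.)
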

 \begin{proof} The given partition $\a$ is a self conjugate partition.  We compute that $a^N_{s+1}=0$,  $b^N_{s+1}=x+y=c^N_{s+1}$ and 
  $$d^N_{s+1}=\left\{
          \begin{array}{ll}
            2(x+y), & \hbox{ if $s\geq 3$;} \\
            x+3y, & \hbox{ if $s=2$.}
          \end{array}
        \right. $$
Also, $a^E_{s}=2(x+y),  \,\,c^E_s=x+y$,  $$d^E_{s}=\left\{
          \begin{array}{ll}
            2(x+y), & \hbox{ if $s\geq 3$;} \\
            x+3y, & \hbox{ if $s=2$}
          \end{array}
        \right.  \hbox{ and }  b^E_{s}=\left\{
          \begin{array}{ll}
            3(x+y), & \hbox{ if $s\geq 4$;} \\
            2x+4y, & \hbox{ if $s=3$;} \\
              x+5y, & \hbox{ if $s=2$.} \\
          \end{array}
        \right.$$
 The conclusion is immediate by Theorem \ref{mainscp} and the condition that $x\leq y$.
 \end{proof}
 By calculating on $L^N_s$ and $L^E_s$ and using the same arguments as above, we also have:
  \begin{cor} Let $s,x,y$ be positive integers with $x\leq y$ and $s\geq 2$.  Let $$\a=((sx+(s-1)y)^x, (sx+(s-2)y)^x,\dots, (sx)^x,((s-1)x)^y,((s-1)x)^y,\dots,x^y),$$
 and $\beta$ be partitions of $n=sx(s(x+y)-y)$.  If $\beta$ contains a part $x+y$ or $2(x+y)$, then $\chi^\a(\beta)=0$.
 \end{cor}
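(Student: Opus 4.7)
The given partition has $m = 2s-1$ distinct parts. Writing $\alpha = (r_1^{k_1},\dots,r_m^{k_m})$, one reads off $r_j = sx + (s-j)y$ and $k_j = x$ for $1 \leq j \leq s$, and $r_j = (2s-j)x$ with $k_j = y$ for $s+1 \leq j \leq 2s-1$. A first step is to verify that $\alpha$ is self-conjugate (which amounts to checking $r_i = \sum_{j=1}^{m-i+1} k_j$) and that the sum of its parts equals $n = sx(s(x+y)-y)$. Since $m$ is odd, the paragraph preceding the corollary indicates that the smallest elements of $G(\alpha)$ sit in $L^E_s$ (and $L^N_s$). The strategy is therefore to compute these two intervals explicitly via Theorem \ref{mainscp} with $v = s$, show that they contain $x+y$ and $2(x+y)$ respectively, and invoke the theorem to conclude $\chi^\alpha(\beta) = 0$.

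The central computation is to evaluate $a^\varepsilon_s, b^\varepsilon_s, c^\varepsilon_s, d^\varepsilon_s$ for $\varepsilon \in \{N,E\}$. Using the partial sums $\sum_{t=1}^{j} k_t = jx$ for $j \leq s$ and $\sum_{t=1}^{j} k_t = sx + (j-s)y$ for $j \geq s$, each quantity to be maximized or minimized over $0 \leq i \leq s-1$ is a piecewise-affine function of $i$, with the break coming from whether the involved indices lie in $\{1,\dots,s\}$ or in $\{s+1,\dots,2s-1\}$. Under the hypothesis $x \leq y$, the relevant linear coefficients have definite sign, so the extremes are attained at the endpoints $i = 0$ or $i = s-1$, allowing all four numbers to be written in closed form.

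Once in closed form, one checks the containments $x+y \in L^E_s = [\max\{a^E_s, c^E_s\}, \min\{b^E_s, d^E_s\}]$ and $2(x+y) \in L^N_s$, from which Theorem \ref{mainscp} yields the conclusion: if $\beta$ has a part equal to $x+y$ or to $2(x+y)$, then $\chi^\alpha(\beta) = 0$. The main obstacle I anticipate is the bookkeeping of the four $\max/\min$ computations, particularly in the small cases $s = 2$ and $s = 3$, where (as witnessed in the previous corollary of the excerpt) some of the extrema jump to alternative branches; those cases would need to be written out separately, but they fit within the same scheme and still yield two intervals containing $x+y$ and $2(x+y)$.
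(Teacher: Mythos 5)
Your proposal is correct and follows essentially the same route as the paper, which itself only says ``by calculating on $L^N_s$ and $L^E_s$ and using the same arguments as above'': identify $m=2s-1$, compute $a^\varepsilon_s,b^\varepsilon_s,c^\varepsilon_s,d^\varepsilon_s$ for $\varepsilon\in\{N,E\}$ with $v=s$, and apply Theorem \ref{mainscp}. Carrying out the computation confirms your claimed containments ($x+y\in L^E_s$ and $2(x+y)\in L^N_s$, with the expected small adjustments at $s=2$), so the plan goes through as stated.
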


\section*{Acknowledgements}
The authors would like to thank anonymous referee for improving the manuscript and would like also to thank Ratsiri Sanguanwong for his useful conversation.

\end{document}